\documentclass[11pt]{amsart}
\usepackage{hyperref}
\usepackage{xypic}
\usepackage{latexsym}
\usepackage{amssymb}
\setlength{\oddsidemargin}{0cm} \setlength{\evensidemargin}{0cm}\setlength{\textwidth}{16cm}\setlength{\topmargin}{1cm}\setlength{\textheight}{20cm}\setlength{\headheight}{.1in}\setlength{\headsep}{.3in}\setlength{\parskip}{.5mm}\setlength{\arraycolsep}{2pt} %favourite length settings

\theoremstyle{plain} % text will be italics
\newtheorem{theorem}{Theorem}[section]
\newtheorem{proposition}[theorem]{Proposition}

\newtheorem{lemma}[theorem]{Lemma}
\newtheorem{conjecture}[theorem]{Conjecture}

\theoremstyle{definition} % text will be roman

\newtheorem{example}[theorem]{Example}

\newtheorem{remark}[theorem]{Remark}
\newtheorem{definition}[theorem]{Definition}

\DeclareMathOperator{\SC}{\mathtt{co}}

\DeclareMathOperator{\RR}{\mathcal{R}}
\DeclareMathOperator{\Msp}{\mathcal{M}}
\DeclareMathOperator{\Wt}{wt}
\DeclareMathOperator{\prim}{prim}
\DeclareMathOperator{\IC}{\mathcal{IC}}

\DeclareMathOperator{\Aut}{Aut}

\DeclareMathOperator{\Sp}{nilp}

\DeclareMathOperator{\KK}{\mathrm{K_0}}
\DeclareMathOperator{\tr}{\mathop{tr}}
\DeclareMathOperator{\Gl}{GL}

\newcommand{\Ho}{\mathrm{H}}

\newcommand{\nilp}{{\rm nilp}}

\renewcommand{\phi}{\varphi}

\newcommand{\Hom}{\mathop{\rm Hom}\nolimits}

\newcommand{\End}{\mathop{\rm End}\nolimits}

\newcommand{\Gr}{\mathop{\rm Gr}\nolimits}

\newcommand{\Sym}{{\mathbf{Sym}}}
\newcommand{\KSym}{\KK({\mathbf{Sym}})}
\newcommand{\KSymp}{\KK({\mathbf{Sym}})_{\pm}}
\newcommand{\KSymm}{\KK({\mathbf{Sym}})_-}

\newcommand{\pt}{\mathop{\rm pt}}

\renewcommand{\tilde}{\widetilde}
\newcommand{\Cp}{\mathbb{C}}

\renewcommand{\AA}{\mathbb A}

\begin{document}

\title[Purity of critical cohomology and Kac's conjecture]{Purity of critical cohomology and Kac's conjecture}
\author[B. Davison ]{Ben Davison}
\address{B. Davison: Department of Mathematics and Statistics\\Glasgow University}
\email{ben.davison@glasgow.ac.uk}

\begin{abstract} 
We provide a new proof of the Kac positivity conjecture for an arbitrary quiver $Q$.  The ingredients are the cohomological integrality theorem in Donaldson--Thomas theory, dimensional reduction, and an easy purity result.  These facts imply the purity of the cohomological Donaldson--Thomas invariants for partially nilpotent representations of a quiver with potential $(\tilde{Q},W)$ associated to $Q$, which in turn implies positivity of the Kac polynomials for $Q$.
\end{abstract}
\maketitle
\tableofcontents
\thispagestyle{empty}

\section{Introduction}
Let $Q$ be a finite quiver.  In \cite{kac83}, Victor Kac introduced the polynomials $a_{\gamma}(q)$, counting the number of isomorphism classes of absolutely indecomposable $Q$-representations of a given dimension vector $\gamma\in\mathbb{N}^{Q_0}$ over $\mathbb{F}_q$, and conjectured that the coefficients of $a_{\gamma}(q)$ are always positive.  This conjecture was proved thirty years later, by Hausel, Letellier and Rodriguez--Villegas in \cite{HLRV13}, via a detailed study of the arithmetic harmonic analysis of Nakajima quiver varieties.  The purpose of this paper is to give a new proof, demonstrating the positivity conjecture as a natural consequence of the cohomological Donaldson--Thomas theory of a quiver with potential $(\tilde{Q},W)$ depending on $Q$.  

We show that via the cohomological integrality theorem of \cite{DaMe15b}, positivity is implied by the purity of a certain $\mathbb{Z}_{\geq 0}^{Q_0}$-graded mixed Hodge structure $\mathcal{H}^{\Sp}_{\tilde{Q},W}$, the underlying mixed Hodge structure of a version of the critical cohomological Hall algebra of Kontsevich and Soibelman \cite[Sec.7]{COHA}.  Here, purity is the property that the associated graded object of the weight filtration for the $j$th cohomologically graded piece of $\mathcal{H}^{\Sp}_{\tilde{Q},W}$ is concentrated entirely in the $j$th place.  Taking the appropriate characteristic function of $\mathcal{H}^{\Sp}_{\tilde{Q},W}$, we recover the refined Donaldson--Thomas partition function for representations of the Jacobi algebra of the pair $(\tilde{Q},W)$ satisfying a nilpotency condition defined in Section \ref{CDTsec}.  It turns out to be quite straightforward to prove the purity of $\mathcal{H}^{\Sp}_{\tilde{Q},W}$, recovering the positivity theorem for the Kac polynomials $a_{\gamma}(q)$.

The purity argument goes via the cohomological version \cite[Thm.A.1]{Chicago2} of the dimensional reduction theorem \cite[Thm.B.1]{BBS}; for an arbitrary dimension vector $\gamma\in\mathbb{N}^{Q_0}$ we use dimensional reduction to prove that the degree $\gamma$ piece of the mixed Hodge structure $\mathcal{H}^{\Sp}_{\tilde{Q},W}$ is isomorphic to the mixed Hodge structure on the compactly supported cohomology of the stack $Z_{\gamma}$ of pairs $(\rho,\lambda)$, where $\rho$ is a $\gamma$-dimensional representation of $Q$ and $\lambda$ is a nilpotent endomorphism of $\rho$.  We give a stratification of this stack $Z_{\gamma}^0\subset\ldots\subset Z_{\gamma}^n=Z_{\gamma}$ such that each $Z_{\gamma}^m\setminus Z_{\gamma}^{m-1}$ has pure compactly supported cohomology and is open in $Z^m_{\gamma}$, from which it follows from the associated long exact sequences in compactly supported cohomology that $Z_{\gamma}$ has pure compactly supported cohomology too.

The idea of relating polynomials arising in positivity conjectures to weight polynomials of mixed Hodge structures from Donaldson--Thomas theory is utilised also in \cite{Efi11}, where the coefficients arising in quantum cluster mutation are related to the coefficients in a particular Donaldson--Thomas (henceforth DT) partition function.  This connection was exploited in \cite{DMSS13} to prove the quantum cluster positivity conjecture for quivers admitting a nondegenerate quasihomogeneous potential.  There is, however, an important difference between that work and this, which is explained in terms of the difference between DT/BPS invariants and the coefficients of DT partition functions, which we now explain.  

Given a quiver with potential $(Q',W'\in\mathbb{C}Q/[\mathbb{C}Q,\mathbb{C}Q]_{\textrm{vect}})$ we consider the DT partition function
\begin{align}
\label{Pfunction}
\mathcal{Z}_{Q',W'}(x,q):=&\chi_{q,x}(\mathcal{H}_{Q',W'})\\ =&\sum_{\gamma\in\mathbb{Z}_{\geq 0}^{Q_0}}\chi_q(\Ho_{c,G_{\gamma}}(M_{Q',\gamma},\phi_{\tr(W')_{\gamma}})^{\vee})x^{\gamma}(-q^{1/2})^{-\chi(\gamma,\gamma)}\in\mathbb{Z}((q^{-1/2}))[[x_i|i\in Q_0]],\nonumber
\end{align}
where the superscript $\vee$ denotes the dual in the category of mixed Hodge structures, for a complex of mixed Hodge structures $L$ one defines
\[
\chi_q(L):=\sum_{j\in\mathbb{Z}}\sum_{n\in\mathbb{Z}}(-1)^j\dim(\Gr^{\Wt}_n(\Ho^j(L)))q^{n/2},
\]
and the other constituent terms of (\ref{Pfunction}) are introduced below.  Note that purity of $L$ implies that $\chi_q(L)$ is a Laurent power series in $-q^{1/2}$ with positive coefficients.  The refined integrality conjecture of Kontsevich and Soibelman \cite{KS} for quivers with potential, proved by Kontsevich and Soibelman in \cite{COHA} (see also \cite{DaMe4}) which is a $q$-analogue of the integrality conjecture of Joyce and Song \cite{JS08}, states that there is a product expansion
\[
\mathcal{Z}_{Q',W'}(x,q):=\prod_{\gamma\in\mathbb{Z}_{\geq 0}^{Q_0}\setminus \{0\}}(1-x^{\gamma})^{\Omega_{\gamma}(q^{-1/2})q^{1/2}/(1-q)}.
\]
Here the $\Omega_{\gamma}(q^{1/2})$ are Laurent polynomials in $q^{1/2}$, and the order in which we take the product (for general, nonsymmetric $Q'$, the $x^{\gamma}$ do not commute), as well as the the polynomials $\Omega_{\gamma}(q^{1/2})$, are determined by a Bridgeland stability condition, even though the spaces $M_{Q',\gamma}$ are not.  The Laurent polynomials $\Omega_{\gamma}(q^{1/2})$ are by definition the refined DT/BPS invariants\footnote{The substitution $q^{1/2}\mapsto q^{-1/2}$ occurs because the refined DT invariants of \cite{KS} are defined with respect to compactly supported cohomology and not dual compactly supported cohomology.} of \cite{KS}.  One should note that although the positivity of the coefficients of $-\Omega_{\gamma}(q^{1/2})$ implies positivity of the coefficients in the partition function $\mathcal{Z}_{Q',W'}(x,q)$, the implication is strictly one way --- this comes down to a multivariable version of the observation that writing a formal power series $f(x)\in\mathbb{Z}[x]$ in terms of an Euler product expansion
\[
f(x)=1+f_1x+f_2x^2+\ldots=\prod_{n\geq 1}(1-x^n)^{-c_n}
\]
the positivity of all the $c_n$ implies the positivity of all the $f_n$, but not vice-versa.

Writing 
\begin{equation}
\label{nilpo}
\chi_{q,x}(\mathcal{H}^{\Sp}_{\tilde{Q},W})=\prod_{\gamma\in\mathbb{Z}_{\geq 0}^{Q_0}\setminus \{0\}}(1-x^{\gamma})^{\Omega^{\nilp}_{\gamma}(q^{-1/2})q^{1/2}/(1-q)}, 
\end{equation}
we will see in Section \ref{ksection} (see (\ref{fkac})) that the Kac polynomials $a_{\gamma}(q)$ are equal, up to a factor of $-q^{1/2}$, to the refined DT invariants $\Omega^{\nilp}_{\gamma}(q^{-1/2})$, so in particular the refined integrality theorem can be demonstrated directly for this partition function.  

In contrast with the cluster algebra setting, in which we are interested only in the positivity of the coefficients of a partition function, proving positivity for the coefficients of $\chi_{q,x}(\mathcal{H}^{\Sp}_{\tilde{Q},W})$ via purity of the mixed Hodge structure $\mathcal{H}^{\Sp}_{\tilde{Q},W}$ does not immediately imply positivity of the Kac polynomials, by the observation above.  The extra ingredient needed is the cohomological upgrade of the refined integrality theorem \cite[Thm.A]{DaMe15b}.  This states that there is a sub \\$(\mathbb{Z}_{\geq 0}^{Q_0}\setminus\{0\})$-graded mixed Hodge structure $\mathfrak{g}^{\nilp}_{\prim}\otimes\Ho_{\mathbb{C}^*}(\pt)=\mathfrak{g}^{\nilp}_{\prim}[u]\subset \mathcal{H}^{\Sp}_{\tilde{Q},W}$, where $u$ is a pure cohomological degree 2 generator, and $(\mathfrak{g}^{\nilp}_{\prim}[u])_{\gamma}:=\mathfrak{g}^{\nilp}_{\prim,\gamma}[u]$ for $\gamma\in(\mathbb{Z}_{\geq 0}^{Q_0}\setminus\{0\})$, and an isomorphism
\begin{equation}
\label{PBWiso}
\mathcal{H}^{\Sp}_{\tilde{Q},W}\cong\Sym(\mathfrak{g}^{\nilp}_{\prim}[u]).
\end{equation}
Furthermore, each graded piece $\mathfrak{g}_{\prim,\gamma}^{\nilp}$ is finite-dimensional.  The left hand side of (\ref{PBWiso}) is the underlying $\mathbb{Z}_{\geq 0}^{Q_0}$-graded mixed Hodge structure of the free unital supercommutative algebra generated by $\mathfrak{g}^{\nilp}_{\prim}[u]$ in the category of $\mathbb{Z}^{Q_0}_{\geq 0}$-graded mixed Hodge structures.  The cohomological DT invariants are then defined (up to a Tate twist) to be the $(\mathbb{Z}^{Q_0}_{\geq 0}\setminus\{0\})$-graded pieces of $\mathfrak{g}^{\nilp}_{\prim}$, and the invariants $\Omega^{\nilp}_{\gamma}(q^{1/2})$, along with the Kac polynomials $a_{\gamma}(q)$, then acquire a new interpretation, as the shifted weight polynomials 
\begin{equation}
\label{shiftexp}
\Omega^{\nilp}_{\gamma}(q^{-1/2})=-q^{-1/2}\chi_q(\mathfrak{g}^{\nilp}_{\prim,\gamma})=-q^{1/2}a_{\gamma}(q^{-1}).  
\end{equation}
Putting the $-q^{1/2}$ factors together, $qa_{\gamma}(q^{-1})=\chi_q(\mathfrak{g}^{\nilp}_{\prim,\gamma})$.  Purity of $\mathcal{H}^{\Sp}_{\tilde{Q},W}$, implies purity of the subobject $\mathfrak{g}_{\prim,\gamma}^{\nilp}\subset \mathcal{H}^{\Sp}_{\tilde{Q},W}$, and so positivity of the coefficients of $\chi_q(\mathfrak{g}^{\nilp}_{\prim,\gamma})$ and $a_{\gamma}(q)$.

\removelastskip\vskip.5\baselineskip\par\noindent{\bf Acknowledgements.}  I would like to thank Sven Meinhardt for originally bringing my attention to this problem and the link to quivers with potential, and for working with me on the foundational results that lie in the background of this paper.  I was supported by the SFB/TR 45 ``Periods, Moduli Spaces and Arithmetic of Algebraic Varieties'' of the DFG (German Research Foundation) during the research contained in this paper, and I would like to thank Daniel Huybrechts for getting me to Bonn.  While completing the writing of the paper, and undertaking corrections, I was employed at the EPFL, supported by the Advanced Grant ``Arithmetic and physics of Higgs moduli spaces'' No. 320593 of the European Research Council.  I would also like to thank Northwestern University for providing excellent working conditions during the writing of this paper, and Ezra Getzler and Kevin Costello for very helpful suggestions while I was there.  This work was partly supported by the NSF RTG grant DMS-0636646.  I would also like to thank the anonymous referee for numerous very helpful suggestions regarding the presentation of what follows.

\section{Cohomological Donaldson--Thomas theory}
\label{CDTsec}
Let $Q$ be a quiver, i.e. the data of two sets $Q_0$ and $Q_1$, the vertices and arrows respectively, and two maps $s,t\colon Q_1\rightarrow Q_0$ taking an arrow to its source and target, respectively.  We assume throughout that $Q_0$ and $Q_1$ are finite.  We define the Ringel form $\chi(-,-)$ by setting 
\[
\chi(\gamma,\gamma')=\sum_{i\in Q_0}\gamma(i)\gamma'(i)-\sum_{a\in Q_1}\gamma(s(a))\gamma'(t(a))
\]
for dimension vectors $\gamma,\gamma'\in\mathbb{Z}_{\geq 0}^{Q_0}$.  We define 
\[
M_{Q,\gamma}=\prod_{a\in Q_1}\Hom(\Cp^{\gamma(s(a))},\Cp^{\gamma(t(a))}), 
\]
and 
\[
G_{\gamma}=\prod_{i\in Q_0}\Gl_{\Cp}(\gamma(i)), 
\]
which acts on $M_{Q,\gamma}$ via change of basis.  
As in \cite{Moz11}, from $Q$ we build a new quiver $\tilde{Q}$ with superpotential $W\in\Cp\tilde{Q}/[\Cp\tilde{Q},\Cp\tilde{Q}]$ by the following procedure:
\begin{enumerate}
\item
For every arrow $a\in Q_1$ we add a new arrow $\tilde{a}$ with $s(\tilde{a})=t(a)$ and $t(\tilde{a})=s(a)$.
\item
For every vertex $i\in Q_0$ we add a new loop $\omega_i$ with $s(\omega_i)=t(\omega_i)=i$.
\item
We set $W=(\sum_{i\in Q_0}\omega_i)(\sum_{a\in Q_1}[a,\tilde{a}])$.
\end{enumerate}
We denote by $\tilde{\chi}$ the Ringel form associated to $\tilde{Q}$. 

%Now define a new quiver with superpotential $(\tilde{Q},W)^T$ by reversing all the arrows of $\tilde{Q}$, and reversing the order of all the cyclic words in $W$.  Define the isomorphism of quivers $\rho:Q\rightarrow Q^T$ by sending $a$ to $\tilde{a}^T$, $\tilde{a}$ to $a^T$, and $\omega_i$ to $\omega_i^T$.  
%\begin{proposition}
%\label{sduality}
%There is an equality $\rho^*(W^T)=-W$, and so in the terminology of \cite{Chicago2}, $(\tilde{Q},W)$ has a self-duality structure such that all dimension vectors are self-dual.
%\end{proposition}

\begin{definition}
Define $M^{\Sp}_{\tilde{Q},\gamma}\subset M_{\tilde{Q},\gamma}$ to be the subvariety of representations which send each $\omega_i$ to a nilpotent endomorphism of $\Cp^{\gamma(i)}$.
\end{definition}  
\begin{definition}
We define a \textit{cut} of $(\tilde{Q},W)$ to be a set of edges $S\subset \tilde{Q}_1$ such that every term of $W$ contains exactly one element of $S$.  Given a cut $S$, we form a quiver $Q_S$ by deleting all of the arrows of $S$ from $\tilde{Q}$, and define the two-sided ideal $I_S\subset \Cp Q_S$ by $I_S:=\langle \frac{\partial W}{\partial a}|a\in S\rangle$, where $\frac{\partial W}{\partial a}$ is defined by cyclically permuting each instance of $a$ in a word of $W$ to the front and then deleting it.  We will always make the assumption that $S$ contains none of the arrows $\omega_i$.
\end{definition}
Given a cut $S$, we define 
\[
Z_{\tilde{Q},W,S,\gamma}\subset M_{Q_S,\gamma}
\]
to be the closed subvariety of representations of $Q_S$ satisfying the relations defined by $I_S$, and define
\[
\overline{Z}_{\tilde{Q},W,S,\gamma}:=Z_{\tilde{Q},W,S,\gamma}\times\prod_{a\in S}\Hom(\Cp^{\gamma(s(a))},\Cp^{\gamma(t(a))})\subset M_{\tilde{Q},\gamma}.  
\]
We define the $G_{\gamma}$-equivariant subvariety $Z^{\nilp}_{\tilde{Q},W,S,\gamma}\subset Z_{\tilde{Q},W,S,\gamma}$ by the condition that each of the loops $\omega_i$ is sent to a nilpotent linear map, and define 
\[
\overline{Z}_{\tilde{Q},W,S,\gamma}^{\nilp}:=Z^{\nilp}_{\tilde{Q},W,S,\gamma}\times\prod_{a\in S}\Hom(\Cp^{\gamma(s(a))},\Cp^{\gamma(t(a))}).  
\]

The following is trivial, and holds precisely because we have chosen $S$ not to contain any of the loops $\omega_i$.  It is required for Theorem \ref{dimred}.
\begin{proposition}
\label{stableunderpi}
Let 
\begin{equation}
\label{pidef}
\pi\colon M_{\tilde{Q},\gamma}\rightarrow M_{Q_S,\gamma}
\end{equation}
be the natural projection, then 
\[
M^{\Sp}_{\tilde{Q},\gamma}=\pi^{-1}\pi(M^{\Sp}_{\tilde{Q},\gamma}).
\]
\end{proposition}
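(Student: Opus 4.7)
The plan is to reduce the equality to a simple observation about which components of a representation the projection $\pi$ forgets. The nontrivial inclusion is $\pi^{-1}\pi(M^{\Sp}_{\tilde{Q},W,\gamma}) \subseteq M^{\Sp}_{\tilde{Q},W,\gamma}$, since the reverse is automatic for any subset of a set under the preimage-of-image operation.

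To handle this inclusion, I would argue as follows. The condition defining $M^{\Sp}_{\tilde{Q},W,\gamma}$ is that, for every vertex $i \in Q_0$, the linear map $\rho(\omega_i) \in \End(\Cp^{\gamma(i)})$ is nilpotent. By the standing assumption on the cut $S$, none of the loops $\omega_i$ lies in $S$. Thus each $\omega_i$ belongs to $(Q_S)_1$, and consequently the projection $\pi$ of \eqref{pidef} does not forget the $\omega_i$-components: for any $\rho \in M_{\tilde{Q},W,\gamma}$, the endomorphism $\rho(\omega_i)$ is recoverable from $\pi(\rho)$.

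Now suppose $\rho \in \pi^{-1}\pi(M^{\Sp}_{\tilde{Q},W,\gamma})$, so there is $\rho' \in M^{\Sp}_{\tilde{Q},W,\gamma}$ with $\pi(\rho) = \pi(\rho')$. By the previous paragraph $\rho(\omega_i) = \rho'(\omega_i)$ for every $i \in Q_0$, and since each $\rho'(\omega_i)$ is nilpotent, so is each $\rho(\omega_i)$. Therefore $\rho \in M^{\Sp}_{\tilde{Q},W,\gamma}$, and the desired equality follows.

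There is really no obstacle here; the whole content is the requirement, built into the definition of a cut in the present paper, that $S$ avoid the distinguished loops $\omega_i$ used to cut out the partially nilpotent locus. Had $S$ been allowed to contain some $\omega_i$, the projection $\pi$ would destroy precisely the data on which the nilpotency condition is imposed, and the statement would fail.
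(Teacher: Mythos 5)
Your proof is correct and matches the paper's intent exactly: the paper declares the proposition trivial, noting only that it ``holds precisely because we have chosen $S$ not to contain any of the arrows $\omega_i$,'' which is precisely the observation you spell out — the nilpotency conditions defining $M^{\Sp}_{\tilde{Q},W,\gamma}$ involve only the $\omega_i$-components, which $\pi$ does not forget. Nothing further is needed.
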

Next define 
\[
\mathcal{H}_{\tilde{Q},W,\gamma}=\left(\Ho_{c,G_{\gamma}}\left(M_{\tilde{Q},\gamma},\phi_{\tr(W)_{\gamma}}\right)\otimes\mathbb{Q}(-\tilde{\chi}(\gamma,\gamma)/2)[-\tilde{\chi}(\gamma,\gamma)]\right)^{\vee}, 
\]
the dual of the equivariant compactly supported critical cohomology with coefficients in the sheaf of vanishing cycles, tensored with a power of the Tate motive, as in \cite{COHA}.  To keep things relatively simple we can work with $\Gr^{\Wt}(\mathcal{H}_{\tilde{Q},W,\gamma})$, the associated graded object with respect to the weight filtration of mixed Hodge structures.  The space $\Gr^{\Wt}(\mathcal{H}_{\tilde{Q},W})=\bigoplus_{\gamma}\Gr^{\Wt}(\mathcal{H}_{\tilde{Q},W,\gamma})$ is an object in the category of $\mathbb{Z}_{\geq 0}^{Q_0}\oplus\mathbb{Z}_{\Wt}\oplus\mathbb{Z}_{\SC}$-graded $\mathbb{Q}$ vector spaces, and $\mathbb{Q}(n/2)[n]$ may be treated as a 1-dimensional vector space concentrated in degree $(0,-n,-n)\in\mathbb{Z}_{\geq 0}^{Q_0}\oplus\mathbb{Z}_{\Wt}\oplus\mathbb{Z}_{\SC}$.  Here $\mathbb{Z}_{\Wt}\cong\mathbb{Z}_{\SC}\cong\mathbb{Z}$ --- the subscripts are there to serve as a reminder that the first copy of $\mathbb{Z}$ is keeping track of the grading induced by weights of mixed Hodge structures, and the second is keeping track of the cohomological degree.

Similarly we define
\[
\mathcal{H}^{\Sp}_{\tilde{Q},W,\gamma}:=\left(\Ho_{c,G_{\gamma}}\left(M_{\tilde{Q},\gamma},\phi_{\tr(W)_{\gamma}|_{M^{\Sp}_{\tilde{Q},\gamma}}}\right)\otimes \mathbb{Q}(-\tilde{\chi}(\gamma,\gamma)/2)[-\tilde{\chi}(\gamma,\gamma)]\right)^{\vee}.  
\]
%We define the Hall algebra multiplications $\mathcal{H}_{\tilde{Q},W}\otimes \mathcal{H}_{\tilde{Q},W}\rightarrow \mathcal{H}_{\tilde{Q},W}$ and $\mathcal{H}^{\Sp}_{\tilde{Q},W}\otimes \mathcal{H}^{\Sp}_{\tilde{Q},W}\rightarrow \mathcal{H}^{\Sp}_{\tilde{Q},W}$as in \cite{COHA, Chicago2}.  Each degree $i$ piece with respect to the cohomological grading $\mathcal{H}_{\tilde{Q},W,\gamma}$ carries a mixed Hodge structure, and the Hall algebra operations are morphisms of mixed Hodge structures, which is why $\Gr_{\Wt}(\mathcal{H}_{\tilde{Q},W})$ and $\Gr_{\Wt}(\mathcal{H}^{\Sp}_{\tilde{Q},W})$ can be considered as $\mathbb{Z}^{Q_0}\oplus\mathbb{Z}_{\SC}\oplus\mathbb{Z}_{\Wt}$-graded algebras.
In Theorems \ref{Chi2} and \ref{dimred} we recall two fundamental results regarding the mixed Hodge structures $\mathcal{H}_{\tilde{Q},W}$ and $\mathcal{H}_{\tilde{Q},W}^{\Sp}$.
\begin{theorem}\cite[Thm.A, Sec.6.3]{DaMe15b}
\label{Chi2}
There exists a sub $(\mathbb{Z}_{\geq 0}^{Q_0}\setminus\{0\})\oplus\mathbb{Z}_{\Wt}\oplus\mathbb{Z}_{\SC}$-graded vector space $\Gr^{\Wt}(\mathfrak{g}^{\nilp}_{\prim})\subset \Gr^{\Wt}(\mathcal{H}^{\nilp}_{\tilde{Q},W})$ such that there is an isomorphism of $\mathbb{Z}_{\geq 0}^{Q_0}\oplus\mathbb{Z}_{\Wt}\oplus\mathbb{Z}_{\SC}$-graded vector spaces
\begin{equation}
\label{Cinte}
\Sym\left(\Gr^{\Wt}\left(\mathfrak{g}^{\nilp}_{\prim}[u]\right)\right)\cong \Gr^{\Wt}\left(\mathcal{H}^{\nilp}_{\tilde{Q},W}\right),
\end{equation}
with $u$ a formal variable of $\mathbb{Z}_{\geq 0}^{Q_0}\oplus \mathbb{Z}_{\Wt}\oplus\mathbb{Z}_{\SC}$-degree $(0,2,2)$.
\end{theorem}
Here we use the Koszul sign rule, with respect to the cohomological grading.  So if $v$ and $v'$ are homogeneous of $\mathbb{Z}_{\geq 0}^{Q_0}\oplus\mathbb{Z}_{\Wt}\oplus\mathbb{Z}_{\SC}$-degree $(\gamma,n,j)$ and $(\gamma',n',j')$ respectively, supercommutativity means that there is an equality
\[
v\cdot v'=(-1)^{jj'}v'\cdot v.
\]
\begin{remark}
Theorem \ref{Chi2} is a special case of the `restricted' version of \cite[Thm.A]{DaMe15b} described in \cite[Sec.6.3]{DaMe15b}.  In fact the mixed Hodge structure $\mathfrak{g}^{\nilp}_{\prim,\gamma}$ is explicitly defined there; we briefly recall the construction.  Let $\mathcal{M}^{0}_{\gamma}$ be the coarse moduli space of semisimple representations of $\tilde{Q}$.  Explicitly, $\mathcal{M}^{0}_{\gamma}$ is the spectrum of the ring of $G_{\gamma}$-invariant functions on the space $M_{\tilde{Q},\gamma}$.  There is a subspace $\Msp^{0,\nilp}_{\gamma}\subset\Msp^{0}_{\gamma}$ defined by the equations $\tr(\omega_i^n)=0$ for all $i\in Q_0$ and $n\geq 1$.  If a simple $\mathbb{C}\tilde{Q}$-module of dimension $\gamma$ exists, define the mixed Hodge module 
\[
\IC_{\gamma}:=\IC_{\Msp^0_{\gamma}}(\mathbb{Q})\otimes\mathbb{Q}(-\tilde{\chi}(\gamma,\gamma)/2)[-\tilde{\chi}(\gamma,\gamma)], 
\]
otherwise define $\IC_{\gamma}:=0$.  The potential $W$ defines a function $\tr(W)_{\gamma}$ on the space $\mathcal{M}^0_{\gamma}$, and we define 
\[
\mathfrak{g}^{\nilp}_{\prim,\gamma}=\Ho_c\left(\mathcal{M}_{\gamma}^0,(\phi_{\tr(W)_{\gamma}}\IC_{\gamma})|_{\mathcal{M}_{\gamma}^{0,\nilp}}\right)^{\vee}.
\]
\end{remark}
\begin{remark}
Theorem A from \cite{DaMe15b} is a statement inside the category of $\mathbb{Z}_{\geq 0}^{Q_0}$-graded \textit{monodromic} mixed Hodge structures, and as such uses unpublished work of Saito, proving a Thom--Sebastiani type theorem for such mixed Hodge structures.  In the case considered in this paper, however, we may use Theorem \ref{dimred} to identify the monodromic mixed Hodge structure $\mathcal{H}^{\nilp}_{\tilde{Q},W}$ with an element of the full subcategory of monodromic mixed Hodge structures containing ordinary mixed Hodge structures (informally, mixed Hodge structures equipped with trivial monodromy), and Theorem \ref{Chi2} becomes a less technical statement inside the symmetric monoidal category of $\mathbb{Z}_{\geq 0}^{Q_0}$-graded mixed Hodge structures (without monodromy).  The analogue of the Thom--Sebastiani isomorphism in this non-monodromic setting is the Kunneth isomorphism --- see \cite[Prop.A.8]{Chicago2} for a precise statement.
\end{remark}
%This is a special case of the main theorem of \cite{Chicago2}, using that $(\tilde{Q},W)$ has a self-duality structure, by Proposition \ref{sduality}.
\begin{theorem}\cite[Cor.A.9]{Chicago2}
\label{dimred}
There are isomorphisms of cohomologically graded mixed Hodge structures 
\begin{align}
\label{nnilpver}
\mathcal{H}_{\tilde{Q},W,\gamma}\cong&\left(\Ho_{c,G_{\gamma}}(\overline{Z}_{\tilde{Q},W,S,\gamma},\mathbb{Q})\otimes\mathbb{Q}(-l/2)[-l]\right)^{\vee} \cong \left(\Ho_{c,G_{\gamma}}(Z_{\tilde{Q},W,S,\gamma},\mathbb{Q})\otimes\mathbb{Q}(-l'/2)[-l']\right)^{\vee}\\
\label{nilpver}
\mathcal{H}^{\Sp}_{\tilde{Q},W,\gamma}\cong&\left(\Ho_{c,G_{\gamma}}(\overline{Z}^{\nilp}_{\tilde{Q},W,S,\gamma},\mathbb{Q})\otimes\mathbb{Q}(-l/2)[-l]\right)^{\vee}\cong\left(\Ho_{c,G_{\gamma}}(Z^{\nilp}_{\tilde{Q},W,S,\gamma},\mathbb{Q})\otimes\mathbb{Q}(-l'/2)[-l']\right)^{\vee}
\end{align}
where 
\[
l=\tilde{\chi}(\gamma,\gamma)
\]
and
\[
l'=\tilde{\chi}(\gamma,\gamma)+2\sum_{a\in S}\gamma(s(a))\gamma(t(a)).
\]
\end{theorem}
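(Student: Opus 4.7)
My plan is to derive both isomorphisms in two steps: first a dimensional reduction identifying the critical cohomology on $M_{\tilde{Q},W,\gamma}$ with the ordinary compactly supported cohomology on $\overline{Z}_{\tilde{Q},W,S,\gamma}$, and second a Künneth (equivalently Thom) reduction from $\overline{Z}$ to $Z$. Since $S$ is a cut, every monomial of $W$ contains exactly one $S$-arrow; writing $M_{\tilde{Q},W,\gamma} = V_{Q_S}\oplus V_S$ with $V_S := \bigoplus_{a\in S}\Hom(\Cp^{\gamma(s(a))},\Cp^{\gamma(t(a))})$ and $V_{Q_S}$ defined analogously, the function $\tr(W)_\gamma$ is then linear along the fibres of the trivial $G_\gamma$-equivariant vector bundle $M_{\tilde{Q},W,\gamma}\to V_{Q_S}$. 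Explicitly it is the fibrewise pairing with the section $x\mapsto (\tfrac{\partial W}{\partial a}(x))_{a\in S}$ of the dual bundle, whose scheme-theoretic zero locus in $V_{Q_S}$ is by definition $Z_{\tilde{Q},W,S,\gamma}$.

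The dimensional reduction theorem of \cite[App.~A]{Chicago2} then yields a $G_\gamma$-equivariant isomorphism of mixed Hodge structures
\[
\Ho_{c,G_\gamma}^{\bullet}(M_{\tilde{Q},W,\gamma},\phi_{\tr(W)_\gamma}\mathbb{Q}) \cong \Ho_{c,G_\gamma}^{\bullet-2\dim V_S}(\overline{Z}_{\tilde{Q},W,S,\gamma},\mathbb{Q})(-\dim V_S).
\]
Dualising and absorbing the twist $\mathbb{Q}(-\tilde{\chi}(\gamma,\gamma)/2)[-\tilde{\chi}(\gamma,\gamma)]$ built into $\mathcal{H}_{\tilde{Q},W,\gamma}$ into a single shift $l = \tilde{\chi}(\gamma,\gamma) + 2\dim V_S$ gives the first isomorphism of (\ref{nnilpver}). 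The second isomorphism is then immediate: $\overline{Z}_{\tilde{Q},W,S,\gamma} = Z_{\tilde{Q},W,S,\gamma}\times V_S$ is a trivial $G_\gamma$-equivariant vector bundle over $Z_{\tilde{Q},W,S,\gamma}$, so the Künneth/Thom isomorphism gives $\Ho_{c,G_\gamma}^{\bullet}(\overline{Z},\mathbb{Q}) \cong \Ho_{c,G_\gamma}^{\bullet-2\dim V_S}(Z,\mathbb{Q})(-\dim V_S)$, which accounts exactly for $l - l' = 2\dim V_S$.

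For the nilpotent statement (\ref{nilpver}) the hypothesis that $S$ contains none of the arrows $\omega_i$ is precisely what is required: by Proposition \ref{stableunderpi}, $M^{\Sp}_{\tilde{Q},W,\gamma}$ is saturated under the projection $\pi$ to $V_{Q_S}$, so it splits as $\pi(M^{\Sp})\times V_S$ and the linear-fibration structure restricts cleanly to $M^{\Sp}$. The dimensional reduction argument of \cite{Chicago2} then applies verbatim, identifying the critical cohomology of $M^{\Sp}$ with the ordinary compactly supported cohomology of $\overline{Z}^{\Sp}_{\tilde{Q},W,S,\gamma}$, and the Künneth step transfers without change.

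The hard part is thus entirely packaged in the dimensional reduction theorem of \cite{Chicago2}, whose proof requires a genuinely nontrivial comparison of vanishing cycles with nearby cycles along a fibrewise-linear family; the present contribution is essentially the bookkeeping observation that Proposition \ref{stableunderpi} makes that reduction compatible with the partial nilpotency condition, together with a trivial Künneth computation for the projection $\overline{Z}\to Z$.
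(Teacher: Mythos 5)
Your overall route is the same as the paper's: both lines are quoted from the dimensional reduction theorem in the appendix of \cite{Chicago2} (applicable because $\tr(W)_{\gamma}$ is linear along the fibres of the projection $\pi$ forgetting the $S$-arrows, $S$ being a cut), the partially nilpotent case (\ref{nilpver}) is reduced to it via Proposition \ref{stableunderpi}, and the passage between the $\overline{Z}$- and $Z$-forms is the K\"unneth factor of the fibre $V_S$. So there is no difference in strategy; the problem is that your shift bookkeeping does not reproduce the theorem as stated. With the normalization of vanishing cycles in force here (the one for which $\phi_0=\id$, cf.\ the example with $W=0$, where $\mathcal{H}_{\tilde{Q},W,\gamma}$ is the \emph{unshifted} compactly supported cohomology of an affine space), the cited theorem identifies $\Ho^{\bullet}_{c,G_{\gamma}}(M_{\tilde{Q},W,\gamma},\phi_{\tr(W)_{\gamma}})$ with $\Ho^{\bullet}_{c,G_{\gamma}}(\overline{Z}_{\tilde{Q},W,S,\gamma},\mathbb{Q})$ with \emph{no} shift and \emph{no} Tate twist; that is exactly the first isomorphism in each line, with $l=\tilde{\chi}(\gamma,\gamma)$ accounting only for the twist already built into the definition of $\mathcal{H}_{\tilde{Q},W,\gamma}$. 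The entire shift by $2\dim V_S$ and twist by $\dim V_S$ arises only in the K\"unneth step $\overline{Z}=Z\times V_S\rightsquigarrow Z$, giving $l'=l+2\dim V_S=\tilde{\chi}(\gamma,\gamma)+2\sum_{a\in S}\gamma(s(a))\gamma(t(a))$, which vanishes -- the fact needed for (\ref{nilpisoo}) and for the later comparison with Hua's formula.

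You instead attach the shift $[-2\dim V_S]$ and twist $(-\dim V_S)$ to the dimensional reduction step itself, and conclude that the $\overline{Z}$-isomorphism of (\ref{nnilpver}) holds with $l=\tilde{\chi}(\gamma,\gamma)+2\dim V_S$, contradicting the statement ($l=\tilde{\chi}(\gamma,\gamma)$). Combined with your (correct) K\"unneth formula $\Ho^{\bullet}_{c,G_{\gamma}}(\overline{Z},\mathbb{Q})\cong\Ho^{\bullet-2\dim V_S}_{c,G_{\gamma}}(Z,\mathbb{Q})\otimes\mathbb{Q}(-\dim V_S)$, your accounting would force $l'=\tilde{\chi}(\gamma,\gamma)+4\dim V_S$, while your verbal claim that this ``accounts for $l-l'=2\dim V_S$'' has the opposite sign to both your own display and the theorem ($l'-l=2\dim V_S>0$: the compactly supported cohomology of the total space of a rank-$d$ bundle sits $2d$ degrees above that of the base, so the $Z$-form must carry the larger shift). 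In short, you have mis-assigned, and effectively double-counted, the fibre-dimension shift; the resulting shifts and twists are not those of the theorem, and since the precise values of $l$ and $l'$ are what the rest of the paper uses, this is not cosmetic. The fix is simply to use the shift-free form of dimensional reduction onto $\pi^{-1}(Z_{\tilde{Q},W,S,\gamma})=\overline{Z}_{\tilde{Q},W,S,\gamma}$ (equivalently, the shifted form directly onto $Z_{\tilde{Q},W,S,\gamma}$, but then with no further K\"unneth shift). Your treatment of the nilpotent locus via Proposition \ref{stableunderpi} -- saturation of $M^{\Sp}_{\tilde{Q},W,\gamma}$ under $\pi$, so that the fibrewise-linear structure restricts -- is correct and is exactly the paper's argument.
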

This is a special case of \cite[Cor.A.9]{Chicago2}, which applies directly to (\ref{nnilpver}), and applies to (\ref{nilpver}) on account of Proposition \ref{stableunderpi}.  The extra shift in $l'$ comes from the fact that the relative (complex) dimension of $\pi$ (from (\ref{pidef})) is $\sum_{a\in S}\gamma(s(a))\gamma(t(a))$.  On the other hand, for all dimension vectors $\gamma\in \mathbb{Z}^{Q_0}$,
\begin{equation}
\tilde{\chi}(\gamma,\gamma)+2\sum_{a\in S}\gamma(s(a))\gamma(t(a))=0,
\end{equation}
and so we deduce that $\mathcal{H}_{\tilde{Q},W,\gamma}$ and $\mathcal{H}^{\Sp}_{\tilde{Q},W,\gamma}$ are given by the \textit{unshifted}, \textit{untwisted} equivariant compactly supported cohomology of the spaces $Z_{\tilde{Q},W,S,\gamma}$ and $Z^{\nilp}_{\tilde{Q},W,S,\gamma}$, i.e. we have isomorphisms of mixed Hodge structures
\begin{align}
\mathcal{H}_{\tilde{Q},W,\gamma}\cong& \left(\Ho_{c,G_{\gamma}}(Z_{\tilde{Q},W,S,\gamma},\mathbb{Q})\right)^{\vee}\\ \label{nilpisoo}
\mathcal{H}^{\Sp}_{\tilde{Q},W,\gamma}\cong& \left(\Ho_{c,G_{\gamma}}(Z^{\nilp}_{\tilde{Q},W,S,\gamma},\mathbb{Q})\right)^{\vee}.
\end{align}

\section{Purity}
Let $L$ be an object in the derived category of mixed Hodge structures.  We say that $L$ is \textit{pure} if the $j$th cohomology of $L$ is pure of weight $j$.  We begin this section with a conjecture\footnote{While this paper was being edited, this conjecture was proved, again via cohomological Donaldson--Thomas theory \cite{IntPre16}.}.
\begin{conjecture}
\label{PurityConj}
For arbitrary $Q$ and $\gamma\in\mathbb{Z}_{\geq 0}^{Q_0}$, the mixed Hodge structure on $\Ho_{c,G_{\gamma}}(Z_{\tilde{Q},W,S,\gamma},\mathbb{Q})$ is pure.
\end{conjecture}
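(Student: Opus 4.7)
The plan is to mimic the stratification strategy used in the introduction for $Z^{\nilp}$, reducing purity for $Z := Z_{\tilde{Q},W,S,\gamma}$ to purity for the $Z^{\nilp}_{\gamma_e}$ with $\gamma_e \le \gamma$. Choose a cut $S$ containing all of $Q_1 \subset \tilde{Q}_1$ (and none of the $\omega_i$), so that by Theorem \ref{dimred}, $Z$ is the $G_\gamma$-variety of pairs $(\rho,f)$ where $\rho \in M_{Q^{\op},\gamma}$ is a representation of the opposite quiver and $f = (f_i) \in \prod_i \End(\C^{\gamma(i)})$ is a global endomorphism in the sense $f_{t(a)}\rho_a = \rho_a f_{s(a)}$ for every $a$. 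Since each $\rho_a$ intertwines $f_{s(a)}$ with $f_{t(a)}$, the generalized eigenspace decomposition of $f$ is automatically preserved by $\rho$, so every point $(\rho,f)$ acquires a canonical direct sum decomposition indexed by the distinct generalized eigenvalues of $f$, with each summand being a pair where the endomorphism has a single eigenvalue.

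Stratify $Z$ by the combinatorial data $\tau=(E,(\gamma_e)_{e \in E})$ recording the eigenvalue labels and their multiplicity vectors (with $\sum_e \gamma_e = \gamma$), refined further by Jordan types of the nilpotent parts of the $f_i$. On each such stratum I expect a quotient-stack description
\[
[Z^{\tau}/G_\gamma] \cong \left(\mathrm{Conf}_E(\C) \times \prod_{e \in E} [Z^{\nilp}_{\gamma_e}/G_{\gamma_e}]\right)\big/\mathrm{Aut}(\tau),
\]
where $\mathrm{Conf}_E(\C)$ parametrizes injections $E \hookrightarrow \C$ and $\mathrm{Aut}(\tau)\subset S_E$ is the subgroup preserving $(\gamma_e)_e$. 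Each factor $[Z^{\nilp}_{\gamma_e}/G_{\gamma_e}]$ is pure by the main purity result of the paper (proved for $Z^{\nilp}$ via the stratification sketched in the introduction), and an argument via the long exact sequences of the stratification, together with the fact that morphisms of mixed Hodge structures of different weights vanish, would then upgrade stratumwise purity to purity of $Z$ itself.

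The main obstacle is that $\mathrm{Conf}_E(\C)$ is not pure in the strict sense of weight equal to cohomological degree: already $\mathrm{Conf}_2(\C)\simeq\C\times\C^*$ has $H^1$ of weight $2$, so a naive K\"unneth argument does not show purity on the stratum. One potential route is to further refine the stratification by quotienting out the free additive action $t\cdot(\rho,f)=(\rho,f+t\cdot I)$, which preserves the commutation relation and removes one non-pure dimension from the configuration factor, and then to use the $G_\gamma$-equivariance to absorb any remaining $\C^*$-factors into pure classifying-stack cohomology. Alternatively, via Theorem \ref{dimred}, one could run a specialization argument on the critical-cohomology side using the $\C^*$-action $(\rho,f)\mapsto(\rho,\lambda f)$, whose zero fibre contains $Z^{\nilp}$, and hope that $\C^*$-equivariance forces a collapse onto the pure part. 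Proving that either approach actually terminates in purity on every stratum is the step that makes this statement a conjecture rather than a theorem.
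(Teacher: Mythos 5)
You were asked to prove a statement that the paper itself does not prove: Conjecture~\ref{PurityConj} is stated and left open. The paper only proves the nilpotent analogue (Theorem~\ref{purity_thm}), gives the trivial example where $Q$ has no arrows, and offers numerical evidence in the closing remark, where the power-structure identity $\chi_q(\Gr_{\Wt}(\mathcal{H}_{\tilde{Q},W}),q)=\bigl(\chi_q(\Gr_{\Wt}(\mathcal{H}^{\nilp}_{\tilde{Q},W}),q)\bigr)^q$ encodes exactly the generalized-eigenvalue decomposition you propose. So there is no paper proof to compare against, and your attempt should be judged on its own terms.

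On its own terms it has a genuine gap, which to your credit you identify yourself. The decomposition of a pair $(\rho,f)$ into generalized eigenspaces of $f$ is correct (each $\rho_a$ intertwines $f_{s(a)}$ and $f_{t(a)}$, so the eigenspace decomposition is $\rho$-stable), and the resulting strata are, up to the $\Aut(\tau)$-quotient, products of nilpotent pieces with a configuration space of eigenvalues. But the strategy of Proposition~\ref{purepaving} requires each stratum to have \emph{pure} compactly supported equivariant cohomology, and the configuration factor destroys this: $\Ho_c(\C^*,\mathbb{Q})$ already has $\Ho^1_c$ of weight $0$, and quotienting by the translation action $f\mapsto f+t\cdot I$ only removes one affine line, leaving e.g.\ $\mathrm{Conf}_2(\C)/\C\cong\C^*$, still impure; the leftover $\C^*$-directions are honest directions of the stratum, not stabilizer factors, so there is nothing to ``absorb into classifying-stack cohomology''. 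The alternative $\C^*$-scaling specialization $(\rho,f)\mapsto(\rho,\lambda f)$ is likewise only a hope: its fixed/limit locus is $Z^{\nilp}$, but deducing purity of $Z$ from purity of $Z^{\nilp}$ would require a hyperbolic-localization or semiprojectivity-type argument that is not available here, since $Z$ is not proper over the fixed locus in the relevant sense. The structural point is that if Conjecture~\ref{PurityConj} is true, purity cannot be established stratum by stratum along your stratification --- it would have to come from cancellations in the long exact sequences between impure strata --- which is precisely why the naive transcription of the proof of Theorem~\ref{purity_thm} fails and why the statement remains a conjecture in the paper.
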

\begin{example}
Assume that $Q$ has no arrows.  Then $(\tilde{Q},W)$ is a quiver with the potential $W=0$.  This implies that each space $\mathcal{H}_{\tilde{Q},W,\gamma}$ carries a pure mixed Hodge structure, since it is the $G_{\gamma}$-equivariant compactly supported cohomology of the affine space $\AA^{\sum_{i\in Q_0}\gamma(i)^2}$, with the trivial shift, as in this case $\tilde{\chi}=0$.
\end{example}
\begin{proposition}
\label{purepaving}
Let $G$ be an algebraic group, and let $X$ be a $G$-equivariant variety, such that $X$ admits a stratification $\emptyset=X_0\subset X_1\subset\ldots X_t=X$ by $G$-invariant subvarieties, in the sense that each $Y_r:=X_r\setminus X_{r-1}$ is open in $X_r$.  Assume that for each $r$, there is an algebraic subgroup $N_r\subset G$ and an inclusion $g_r\colon \mathbb{A}_{\Cp}^{s_r}\rightarrow Y_r$ of an $N_r$-invariant subspace such that the morphism of stacks $g'_r\colon [\mathbb{A}_{\Cp}^{s_r}/N_r]\rightarrow [Y_r/G]$ is an isomorphism.  We assume further that the equivariant cohomology $\Ho_{c,N_r}(\pt,\mathbb{Q})$ is pure.  Then $\Ho_{c,G}(X,\mathbb{Q})$ is pure.
\end{proposition}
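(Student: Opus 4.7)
The plan is to induct on $i$, showing that $\Ho_{c,G}(X_i,\Q)$ is pure for $0\le i\le t$; the total claim then follows from $X=X_t$. The base case $X_0=\emptyset$ is vacuous. The inductive tool is the long exact sequence of mixed Hodge structures in equivariant compactly supported cohomology attached to the closed embedding $X_{i-1}\hookrightarrow X_i$ with open complement $Y_i$:
\begin{equation*}
\cdots\to \Ho^j_{c,G}(Y_i,\Q)\to \Ho^j_{c,G}(X_i,\Q)\to \Ho^j_{c,G}(X_{i-1},\Q)\to \Ho^{j+1}_{c,G}(Y_i,\Q)\to\cdots.
\end{equation*}

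The first step would be to establish purity of each stratum's equivariant cohomology. The stack isomorphism $g'_i:[\AA_{\Cp}^{s_i}/N_i]\cong [Y_i/G]$ yields
\begin{equation*}
\Ho_{c,G}(Y_i,\Q)\cong \Ho_{c,N_i}(\AA_{\Cp}^{s_i},\Q).
\end{equation*}
Since $\AA_{\Cp}^{s_i}\to\pt$ is $N_i$-equivariant and $\Ho^\bullet_c(\AA_{\Cp}^{s_i},\Q)$ is concentrated in a single degree, the Leray spectral sequence for $[\AA_{\Cp}^{s_i}/N_i]\to BN_i$ should collapse, identifying the above with $\Ho_{c,N_i}(\pt,\Q)\otimes\Q(-s_i)[-2s_i]$, which is pure by hypothesis.

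The second step exploits the fact that morphisms of mixed Hodge structures are strictly compatible with the weight filtration, so any map from a pure MHS of weight $a$ to a pure MHS of weight $b$ with $a<b$ necessarily vanishes. Under the inductive hypothesis that $\Ho^j_{c,G}(X_{i-1},\Q)$ is pure of weight $j$, and given by Step 1 that $\Ho^{j+1}_{c,G}(Y_i,\Q)$ is pure of weight $j+1$, all the connecting maps in the long exact sequence vanish. The sequence thus breaks into short exact sequences
\begin{equation*}
0\to \Ho^j_{c,G}(Y_i,\Q)\to \Ho^j_{c,G}(X_i,\Q)\to \Ho^j_{c,G}(X_{i-1},\Q)\to 0,
\end{equation*}
and strictness of the weight filtration forces the middle term to be pure of weight $j$ as well, closing the induction.

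The main obstacle I anticipate is the Step 1 identification of $\Ho_{c,N_i}(\AA_{\Cp}^{s_i},\Q)$, since the hypothesis permits an a priori arbitrary algebraic $N_i$-action on $\AA_{\Cp}^{s_i}$. If the action were linear, the Thom isomorphism for the resulting vector bundle $[\AA_{\Cp}^{s_i}/N_i]\to BN_i$ would give the claim immediately; in the general setting one needs to verify that the top compactly supported cohomology sheaf is $N_i$-equivariantly trivial up to the expected Tate twist, which should follow because the identity component of $N_i$ acts trivially on the one-dimensional $\Ho^{2s_i}_c(\AA_{\Cp}^{s_i},\Q)$, together with a character argument for the component group. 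Everything else is formal within the category of mixed Hodge structures.
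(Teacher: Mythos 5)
Your argument is correct and follows essentially the same route as the paper: purity of each stratum via the identification $\Ho_{c,G}(Y_i,\Q)\cong\Ho_{c,N_i}(\mathbb{A}_{\Cp}^{s_i},\Q)$ together with a weight shift by $2s_i$ down to $\Ho_{c,N_i}(\pt,\Q)$ (the paper phrases this as a Gysin isomorphism, and your worry about nonlinear actions is harmless since any automorphism of a complex variety preserves orientation and hence acts trivially on the top compactly supported cohomology), followed by the long exact sequence of mixed Hodge structures for the stratification and induction.
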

\begin{proof}
The isomorphism $g'_r$ induces an isomorphism in compactly supported cohomology 
\begin{equation}
\label{purepar}
\Ho_{c,N_r}(\mathbb{A}_{\Cp}^{s_r},\mathbb{Q})\rightarrow \Ho_{c,G}(Y_r,\mathbb{Q}).
\end{equation}
There is a Gysin isomorphism $\Ho_{c,N_r}^{j-2s_r}(\pt,\mathbb{Q})\rightarrow \Ho_{c,N_r}^j(\mathbb{A}_{\Cp}^{s_r},\mathbb{Q})$ which shifts weights by $2s_r$, from which we deduce that the left hand side of (\ref{purepar}) is pure, and so the right hand side is too.  The proposition then follows from the fact that the long exact sequence in compactly supported cohomology
\[
\rightarrow \Ho_{c,G}^j(Y_r,\mathbb{Q})\rightarrow \Ho_{c,G}^j(X_{r},\mathbb{Q})\rightarrow \Ho_{c,G}^j(X_{r-1},\mathbb{Q})\rightarrow
\]
is a complex in the category of mixed Hodge structures, and induction on $r$.
\end{proof}
\begin{theorem}
\label{purity_thm}
For all $\gamma\in\mathbb{Z}_{\geq 0}^{Q_0}$ the mixed Hodge structure on $\Ho_{c,G_{\gamma}}(Z^{\nilp}_{\tilde{Q},W,S,\gamma},\mathbb{Q})$ is pure.
\end{theorem}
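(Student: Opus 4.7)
The plan is to apply Proposition \ref{purepaving} to $X = Z^{\nilp}_{\tilde{Q},W,S,\gamma}$ and $G = G_{\gamma}$, stratifying by the Jordan types of the nilpotent loop operators $\omega_i$. For each tuple $\underline{\lambda} = (\lambda^{(i)})_{i \in Q_0}$ of partitions with $\lambda^{(i)} \vdash \gamma(i)$, let
\[
Y_{\underline{\lambda}} = \{x \in Z^{\nilp}_{\tilde{Q},W,S,\gamma} \mid \omega_i(x) \text{ has Jordan type } \lambda^{(i)} \text{ for every } i \in Q_0\}.
\]
These are locally closed $G_{\gamma}$-stable subvarieties, and totally ordering them refining the product of the dominance orders yields a filtration $\emptyset = X_0 \subset X_1 \subset \cdots \subset X_t = Z^{\nilp}_{\tilde{Q},W,S,\gamma}$ of the required form.

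For a given $\underline{\lambda}$, fix a representative $\underline{N} = (N_i)$ and let $Z(\underline{N}) = \prod_i Z_{\GL(\gamma(i))}(N_i) \subset G_{\gamma}$ be its centralizer. The projection $Y_{\underline{\lambda}} \to G_{\gamma}\cdot \underline{N} \cong G_{\gamma}/Z(\underline{N})$ remembering only the loops is $G_{\gamma}$-equivariant, and its fiber over $\underline{N}$ is the locus
\[
F_{\underline{\lambda}} \subset \bigoplus_{\substack{a \in (Q_S)_1 \\ a \neq \omega_i}} \Hom(\Cp^{\gamma(s(a))},\Cp^{\gamma(t(a))})
\]
of non-loop surviving arrows satisfying $\partial W/\partial \alpha = 0$ for every $\alpha \in S$, with $\omega_i$ frozen to $N_i$. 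Thus $Y_{\underline{\lambda}} \cong G_{\gamma} \times^{Z(\underline{N})} F_{\underline{\lambda}}$ and $[Y_{\underline{\lambda}}/G_{\gamma}] \cong [F_{\underline{\lambda}}/Z(\underline{N})]$.

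The crucial point is that $F_{\underline{\lambda}}$ is an affine space. Since $W = \sum_{a \in Q_1}(\omega_{s(a)} a \tilde{a} - \omega_{t(a)} \tilde{a} a)$, every monomial of $W$ is of degree one in each of $a$ and $\tilde{a}$, so for $\alpha \in S$ the noncommutative derivative $\partial W/\partial \alpha$ is linear in the opposite member of the pair $\{a,\tilde{a}\}$ (which is precisely the corresponding surviving arrow), with coefficients depending linearly on the $\omega_i$. Substituting $\omega_i = N_i$ therefore cuts out $F_{\underline{\lambda}}$ as a linear subspace, so $F_{\underline{\lambda}} \cong \mathbb{A}^{s_{\underline{\lambda}}}_{\Cp}$ for some $s_{\underline{\lambda}}$. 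On the other side, the centralizer $Z_{\GL_n}(N)$ of a nilpotent matrix $N$ is the semidirect product of a connected unipotent radical with the reductive Levi $\prod_j \GL_{m_j}$ (where $m_j$ is the number of Jordan blocks of size $j$ in $\lambda$). Hence $Z(\underline{N})$ has reductive quotient a product of general linear groups, and $\Ho_{c,Z(\underline{N})}(\pt,\mathbb{Q})$ is pure. Proposition \ref{purepaving} applies and delivers the theorem.

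The main obstacle is verifying the linearity of the relations $I_S$ in the surviving arrows after freezing $\underline{\omega}$; this rests essentially on the specific shape of the Mozgovoy construction of $(\tilde{Q},W)$ — each monomial of $W$ contains exactly one $\omega$-letter and exactly one letter of $S$, forcing $\partial W/\partial \alpha$ to be a commutator-type expression homogeneous of degree one in the partner arrow. The same argument would fail for a general quiver with potential, which explains why the unrestricted Conjecture \ref{PurityConj} cannot be attacked by a direct Jordan-type stratification once the nilpotency hypothesis is dropped.
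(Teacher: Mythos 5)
Your proposal is correct and follows essentially the same route as the paper: stratify $Z^{\nilp}_{\tilde{Q},W,S,\gamma}$ by the Jordan type of the nilpotent endomorphism, identify each stratum's quotient stack as an affine space modulo the centralizer of a fixed representative (which has pure equivariant cohomology of a point), and invoke Proposition \ref{purepaving}. The only cosmetic difference is that you exhibit the fibre as a linear subspace cut out by the relations with $\omega$ frozen, whereas the paper identifies it as $\bigoplus_{a\in Q_1}\Hom_{\Cp[x]}\bigl(\Cp^{\gamma(s(a))},\Cp^{\gamma(t(a))}\bigr)$ and records its dimension $\sum_{j,j'}\min(j,j')\Psi_{s(a),j}\Psi_{t(a),j'}$ for later use in the comparison with Hua's formula.
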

\begin{proof}
Since the left hand side of (\ref{nilpver}) does not depend on $S$ we may pick whichever cut $S$ we like, as long as it contains none of the loops $\omega_i$, so that we may use Proposition \ref{stableunderpi} to prove (\ref{nilpver}) in the first place.  We assume that $S$ consists of the arrows $\tilde{a}$ for $a\in Q_1$.  We consider a different description of the representation varieties $Z_{\tilde{Q},W,S,\gamma}$.  For $\tilde{a}\in S$ we have 
\[
\frac{\partial W}{\partial \tilde{a}}=\omega_{t(a)}a-a\omega_{s(a)},
\]
%\begin{align}
%\label{newdesc}
%\frac{\partial \tilde{W}}{\partial b}=\begin{cases} a^*\omega_{t(a)}-\omega_{s(a)}a^*&\mathrm{if}\hbox{ }b=a\\ \omega_{t(a)}a-a\omega_{s(a)}&\mathrm{otherwise.}\end{cases} 
%\end{align}
and we deduce that $Z_{\tilde{Q},W,S,\gamma}$ is the space of pairs $(\rho,\lambda)$, where 
\[
\rho\in \prod_{b\in Q_1}\Hom(\Cp^{\gamma(s(b))},\Cp^{\gamma(t(b))})=M_{Q,\gamma}
\]
and 
\[
\lambda\in\prod_{i\in Q_0}\End(\Cp^{\gamma(i)})
\]
is an endomorphism of $\rho$ considered as a representation of $Q$.  Similarly, $Z^{\nilp}_{\tilde{Q},W,S,\gamma}$ is the space of such pairs $(\rho,\lambda)$, where $\lambda$ is a nilpotent endomorphism of $\rho$.

A nilpotent endomorphism of the space $\Cp^r$ is defined, up to conjugation, by its Jordan normal form, which is defined in turn by the partition of $r$ induced by taking the sizes of Jordan normal blocks.  We let $\mathcal{P}(\gamma)$ be the set of partitions of $\gamma$, i.e. the set of assignments of partitions $\gamma(i)=\pi_1(i)+\ldots+\pi_{k_i}(i)$, where $\pi_s(i)\geq \pi_{s+1}(i)>0$ for all $s$, to each of the entries $\gamma(i)$.  We let $\mathcal{P}$ be the union of the $\mathcal{P}(\gamma)$ for $\gamma\in\mathbb{Z}^{Q_0}_{\geq 0}$.  Fixing the dimension vector $\gamma$, the set of choices of $\lambda$, up to the action of the gauge group $G_{\gamma}$, is in natural correspondence with $\mathcal{P}(\gamma)$.  The centralizer $N_{\pi}$ of such an $\lambda$ is then an affine bundle over a product of linear groups, and in particular has pure $\Ho_{c,N_{\pi}}(\pt,\mathbb{Q})$ and is special, in the sense that principle $N_{\pi}$-bundles are Zariski locally trivial; we will more explicitly describe the $N_{\pi}$ in the next section, with Lemma \ref{NCalc}.

For $\pi\in\mathcal{P}$ we define $Y_{\pi}$ to be the space of pairs $(\rho,\lambda)$ such that $\lambda$ belongs to the conjugacy class corresponding to $\pi$.  We define a partial ordering on the set $\mathcal{P}(\gamma)$ by the prescription that $\pi>\pi'$ if the orbit of the nilpotent cone $\mathcal{N}_{\gamma}\subset \prod_{i\in Q_0} \mathrm{End}(\mathbb{C}^{\gamma(i)})$ corresponding to $\pi$ contains the orbit corresponding to $\pi'$ in its closure.  Since $\mathcal{P}(\gamma)$ is finite we can complete this partial ordering to a total ordering.  We pick such a total ordering, and we define 
\[
Z^{\pi}_{\gamma}=\bigcup_{\pi'\leq \pi} Y_{\pi'},
\]
giving a stratification of $Z^{\nilp}_{\tilde{Q},W,S,\gamma}$ (in the sense of Proposition \ref{purepaving}).

Given a multipartition $\pi\in\mathcal{P}(\gamma)$, we write $\pi(i)=(1^{\Psi_{i,1}},2^{\Psi_{i,2}},\ldots)$.  Given a pair $(\rho,\lambda)$ belonging to the space $Y_{\pi}$, we consider it as a representation of the quiver $Q$ in the category of nilpotent $\Cp[t]$-modules, where each $i\in Q_0$ is sent to $\bigoplus_{\alpha\in\mathbb{Z}_{\geq 1}}(\Cp[t]/t^{\alpha})^{\oplus^{\Psi_{i,\alpha}}}$.  Given two numbers $\alpha,\alpha'\in\mathbb{Z}_{\geq 1}$, there is an isomorphism $\Hom(\Cp[t]/t^{\alpha},\Cp[t]/t^{\alpha'})\cong\mathbb{A}_{\Cp}^{\min(\alpha,\alpha')}$, and so we deduce that 
\[
[Y_{\pi}/G_{\gamma}]\cong\left[\left(\prod_{a\in Q_1,\alpha,\alpha'\in\mathbb{Z}_{\geq 1}} \mathbb{A}_{\Cp}^{\min(\alpha,\alpha')\Psi_{s(a),\alpha}\Psi_{t(a),\alpha'}}\right)/N_{\pi}\right].
\]
The theorem then follows from Proposition \ref{purepaving}.
\end{proof}
Consider the partition function 
\begin{equation}
\label{genfun}
\chi_{q,x}\left(\Gr^{\Wt}\left(\mathcal{H}^{\nilp}_{\tilde{Q},W}\right)\right):=\sum_{(\gamma,n,j)\in\mathbb{Z}_{\geq 0}^{Q_0}\oplus\mathbb{Z}_{\Wt}\oplus\mathbb{Z}_{\SC}}(-1)^j\dim\left(\Gr^{\Wt}_n\left( \mathcal{H}^{\Sp,j}_{\tilde{Q},W,\gamma}\right)\right)x^{\gamma}q^{n/2}\in\mathbb{Z}((q^{ 1/2}))[[x_i| i\in {Q_0}]]
\end{equation}
where we use the standard notation $x^{\gamma}:=\prod_{i\in Q_0}x_i^{\gamma(i)}$.  Fixing $\gamma \in\mathbb{Z}_{\geq 0}^{Q_0}$, the coefficient of $x^{\gamma}$ is given by the weight polynomial\footnote{Strictly speaking, this should be called the weight formal power series.} of $\Ho_{c,G_{\gamma}}(M^{\Sp}_{\tilde{Q},\gamma},\phi_{\tr(W)_{\gamma}})^{\vee}$, and in particular the infinite alternating sum given by fixing a power of $q^{1/2}$ and $x$ is well-defined, and so (\ref{genfun}) is well-defined --- in fact by Theorem \ref{dimred} and Theorem \ref{purity_thm} all these mixed Hodge structures are pure and we deduce
\begin{align*}
\chi_{q,x}\left(\Gr^{\Wt}\left(\mathcal{H}^{\nilp}_{\tilde{Q},W}\right)\right)=&\sum_{(\gamma,n)\in\mathbb{Z}_{\geq 0}^{Q_0}\oplus\mathbb{Z}}(-1)^n\dim\left( \mathcal{H}^{\Sp,n}_{\tilde{Q},W,\gamma}\right)x^{\gamma}q^{n/2}
\\=&\sum_{(\gamma,n)\in\mathbb{Z}_{\geq 0}^{Q_0}\oplus\mathbb{Z}}\dim \left(\mathcal{H}^{\Sp,2n}_{\tilde{Q},W,\gamma}\right)x^{\gamma}q^{n}.
\end{align*}
By (\ref{nilpisoo}) and the proof of Theorem \ref{purity_thm} the terms in the first equality, for odd $n$, are all zero, which accounts for the second equality.

\begin{definition}
\label{SymDef}
Given an element of the additive group
\begin{align}
\label{Rplus}
\RR:=\left\{f=\sum_{\gamma\in\mathbb{Z}^{Q_0}_{\geq 0}}f_{\gamma}(q^{1/2})x^{\gamma}\in \mathbb{Z}((q^{ 1/2}))[[x_i|i\in {Q_0}]]\textrm{ such that } f_{0}=0\right\}
\end{align}
we write $f_{\gamma}(q^{1/2})=\sum_{n\in\mathbb{Z}}f_{\gamma,n}q^{n/2}$ and we define
\[
\KSym (f)=\sum_{h}\prod_{(\gamma,n)\in\mathbb{Z}_{\geq 0}^{Q_0}\oplus\mathbb{Z}}\binom{f_{\gamma,n}+h(\gamma,n)-1}{h(\gamma,n)}x^{h(\gamma,n)\gamma}q^{h(\gamma,n)n/2},
\]
where the sum is over functions $h\colon \mathbb{Z}_{\geq 0}^{Q_0}\oplus\mathbb{Z}\rightarrow \mathbb{Z}_{\geq 0}$ sending all but finitely many elements to zero, and by convention, the term in the sum corresponding to $h=0$ is $1$.  
\end{definition}
This defines a group homomorphism 
\[
\KSym \colon \RR\rightarrow 1+\RR
\]
where $1+\RR$ is considered as a subgroup of the group of units of $\mathbb{Z}((q^{1/2}))[[x_i|i\in {Q_0}]]$.  There is an inverse to the function $\KSym $, and so it is injective.  The function $\KSym $ is known as the plethystic exponential.  For later purposes we define the group 
\begin{align}
\label{Rminus}
\RR_-:=\left\{f=\sum_{\gamma \in\mathbb{Z}^{Q_0}_{\geq 0}}f_{\gamma}(q^{1/2})x^{\gamma}\in \mathbb{Z}((q^{-1/2}))[[x_i|i\in {Q_0}]]\textrm{ such that } f_{0}=0\right\},
\end{align}
and define the plethystic exponential similarly
\[
\KSymm \colon \RR_-\rightarrow 1+\RR_-.
\]

If $X$ is a variety acted on by an algebraic group $G$, then the cohomology $\Ho_G(X,\mathbb{Q})$ need not be bounded, and $\chi_q(\Ho_G(X,\mathbb{Q}))\in \mathbb{Z}((q^{1/2}))$.  Similarly, $\chi_q(\Ho_{c,G}(X,\mathbb{Q}))\in\mathbb{Z}((q^{-1/2}))$.  Accordingly, $1+\RR$ is the natural target for weight polynomials applied to $\mathbb{Z}_{\geq 0}^{Q_0}$-graded equivariant cohomology, while $1+\RR_-$ is the natural target for weight polynomials applied to $\mathbb{Z}_{\geq 0}^{Q_0}$-graded compactly supported equivariant cohomology.

The function $\KSym $ has a more illuminating description, which also explains the link with free supercommutative algebras.  Let $V$ be a $\mathbb{Z}_{\geq 0}^{Q_0}\oplus\mathbb{Z}_{\Wt}\oplus \mathbb{Z}_{\SC}$-graded vector space, and let $f$ be its characteristic function
\[
f=\sum_{(\gamma,n,j)\in \mathbb{Z}_{\geq 0}^{Q_0}\oplus\mathbb{Z}_{\Wt}\oplus\mathbb{Z}_{\SC}}(-1)^j\dim(V_{\gamma,n}^j)x^{\gamma}q^{n/2},
\]
which we assume is well-defined and belongs to $\RR$.  Then $\KSym (f)$ is the characteristic function of the underlying graded vector space of the free supercommutative algebra generated by $V$.  In other words, $\KSym $ is the decategorification of the operation of forming the free supercommutative algebra, explaining our choice of notation in place of, say, $\mathbf{EXP}$.

In terms of this operation, we may write, using the cohomological integrality theorem (Theorem \ref{Chi2}):
\[
\chi_{q,x}\left(\Gr^{\Wt}\left(\mathcal{H}^{\nilp}_{\tilde{Q},W}\right)\right)=\KSym \left(\sum_{(\gamma,n,j)\in(\mathbb{Z}_{\geq 0}^{Q_0}\setminus \{0\})\oplus\mathbb{Z}_{\Wt}\oplus\mathbb{Z}_{\SC}}\dim\left(\Gr^{\Wt}_n(\mathfrak{g}^{\nilp,j}_{\prim,\gamma})\right)(-1)^{j}x^{\gamma}q^{n/2}(1-q)^{-1}\right).
\]
Here the $(1-q)^{-1}$ factor is the contribution of $\chi_q(\Ho_{\mathbb{C}^*}(\pt))=(1-q)^{-1}$ to the tensor product $\mathfrak{g}^{\nilp}_{\prim,\gamma}[u]\cong \mathfrak{g}^{\nilp}_{\prim,\gamma}\otimes \Ho_{\mathbb{C}^*}(\pt)$ on the left hand side of Equation (\ref{Cinte}).  The following is trivial.
\begin{lemma}
\label{Lsimp}
If $A$ is a free supercommutative $\mathbb{Z}\oplus\mathbb{Z}_{\SC}$-graded algebra generated by the graded subspace $V$, and $A_{m}^n=0$ unless $m=n$, then $V_{m}^n=0$ unless $m=n$.
\end{lemma}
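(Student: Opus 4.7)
The plan is to observe that $V$ sits inside $A$ as a graded subspace --- it is the image of the canonical injection $V\hookrightarrow \Sym(V)=A$ identifying $V$ with the degree-$1$ part of the symmetric-algebra filtration on the free supersymmetric algebra. Since, by construction, the $\mathbb{Z}\oplus\mathbb{Z}_{\SC}$ grading on $A$ is the one induced from the grading on $V$, this canonical inclusion is a morphism of $\mathbb{Z}\oplus\mathbb{Z}_{\SC}$-graded vector spaces, so that $V_{m,n}\hookrightarrow A_{m,n}$ for every $(m,n)\in\mathbb{Z}\oplus\mathbb{Z}_{\SC}$.

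With that inclusion in hand, the hypothesis $A_{m,n}=0$ for $m\neq n$ immediately forces $V_{m,n}=0$ for $m\neq n$, which is the desired conclusion. This is why the author declares the lemma trivial; there is no real obstacle to overcome, the only (minor) point that one might have to check is that the canonical map from $V$ into the free supersymmetric algebra on $V$ is indeed grading-preserving, which follows from the universal property.

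The actual content of the statement lies in the downstream application, not in the proof: combined with Theorem \ref{Chi2} applied to the graded subspace $V=\Gr_{\Wt}(\mathfrak{g}^{\nilp}_{\prim})[u]\subset\Gr_{\Wt}(\mathcal{H}^{\nilp}_{\tilde{Q},W})$, together with the purity of $\Gr_{\Wt}(\mathcal{H}^{\nilp}_{\tilde{Q},W})$ obtained by combining Theorem \ref{dimred} with Theorem \ref{purity_thm}, the lemma propagates purity (weight degree equals cohomological degree) from the full Hall algebra to its space of primitive generators $\Gr_{\Wt}(\mathfrak{g}^{\nilp}_{\prim})$. This is precisely the input needed to read off positivity of the Kac polynomials from the identification of $\Omega_{\gamma}(q)$ with the weight polynomial of $\mathfrak{g}^{\nilp}_{\prim,\gamma}$.
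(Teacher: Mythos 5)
Your proof is correct and is essentially the paper's own treatment: the paper simply declares the lemma trivial and gives no argument, and your observation that $V$ embeds grading-preservingly as the $\Sym^1$ summand of $A\cong\Sym(V)$, so that $V_{m,n}\subseteq A_{m,n}=0$ whenever $m\neq n$, is exactly the intended one-line justification. Your remarks on the downstream use (propagating purity from $\Gr_{\Wt}(\mathcal{H}^{\nilp}_{\tilde{Q},W})$ to $\mathfrak{g}^{\nilp}_{\prim}$) also accurately reflect the paper.
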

We deduce from Theorem \ref{purity_thm} and Lemma \ref{Lsimp} that we may write 
\begin{equation}
\label{alignment}
\chi_{q,x}\left(\Gr^{\Wt}\left(\mathcal{H}^{\nilp}_{\tilde{Q},W}\right)\right)=\KSym \left(\sum_{(\gamma,m)\in(\mathbb{Z}_{\geq 0}^{Q_0}\setminus\{0\})\oplus\mathbb{Z}}\dim(\mathfrak{g}^{\nilp,m}_{\prim,\gamma})x^{\gamma}(-q^{1/2})^{m}(1-q)^{-1}\right),
\end{equation}
i.e. $\Gr^{\Wt}_n(\mathfrak{g}^{\nilp,j}_{\prim,\gamma})=0$ for $j\neq n$.  In other words, $\mathfrak{g}_{\prim}^{\nilp}$ is a pure $(\mathbb{Z}_{\geq 0}^{Q_0}\setminus \{0\})$-graded mixed Hodge structure.
\section{Kac polynomials}
\label{ksection}
Associated to the quiver $Q$ and a dimension vector $\gamma\in\mathbb{Z}_{\geq 0}^{Q_0}$ is the \textit{Kac polynomial} $a_{\gamma}(q)$ introduced by Kac \cite{kac83}, counting the number of isomorphism classes of absolutely indecomposable $\gamma$-dimensional representations of $Q$ over $\mathbb{F}_q$.

We recall a theorem of Hua, in terms of the $\KSym $ operation of Definition \ref{SymDef}.  Firstly we will need some notation.  For two partitions $\pi$, $\pi'$ we define $\langle \pi,\pi'\rangle:=\sum_{\alpha\in\mathbb{Z}_{\geq 1}}\pi_{\alpha}\pi'_{\alpha}$.  If we write a partition $\pi$ in the notation $\pi=(1^{\psi_1},2^{\psi_2},\ldots)$, we define $b_{\pi}(q)=\prod_{j}(1-q)\ldots(1-q^{\psi_j})$.  If $\pi\in\mathcal{P}$, then $\pi\in\mathcal{P}(\gamma)$ for some $\gamma$ and we define $|\pi|:=\gamma$.
\begin{theorem}\cite[Thm.4.9]{Hua00}
\label{HuaThm}
There is an equality of generating functions
\begin{equation}
\label{HuaFormula}
\KSym \left(\sum_{\substack{\gamma\in\mathbb{Z}_{\geq 0}^{Q_0},\\ \gamma\neq 0}} -x^{\gamma}a_{\gamma}(q)(1-q)^{-1}\right)=\sum_{\pi\in\mathcal{P}}c_{\pi}(q)x^{|\pi|}\in 1+\mathcal{R},
\end{equation}
where 
\[
c_{\pi}(q):=\frac{\prod_{a\in Q_1}q^{\langle \pi(s(a)),\pi(t(a))\rangle}}{\prod_{i\in Q_0}q^{\langle \pi(i),\pi(i)\rangle}b_{\pi(i)}(q^{-1})}.
\]
\end{theorem}

\begin{lemma}\cite[Lem.3.1]{Hua00}
\label{HuaLemma}
Let $\pi$ and $\pi'$ be two partitions, which we write in the notation $\pi=(1^{\psi_1},2^{\psi_2},\ldots)$.  Then there is an identity
\[
\langle \pi,\pi'\rangle=\sum_{\alpha,\alpha'\in\mathbb{Z}_{\geq 1}}\min(\alpha,\alpha')\psi_{\alpha}\psi'_{\alpha'}.
\]
\end{lemma}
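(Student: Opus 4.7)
The plan is to prove this as a short combinatorial identity via a switch of summation order, using the elementary formula
\[
\min(n,n')=\sum_{k\geq 1}[k\leq n][k\leq n'],
\]
which rewrites the minimum as a double indicator sum. This linearises the nonlinear $\min$ operation so that it interacts cleanly with the weighted sum on the right-hand side.

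First I would substitute this identity into the right-hand side and interchange the order of summation:
\[
\sum_{n,n'}\min(n,n')\psi_n\psi'_{n'}
= \sum_{k\geq 1}\sum_{n\geq k}\sum_{n'\geq k}\psi_n\psi'_{n'}
= \sum_{k\geq 1}\Bigl(\sum_{n\geq k}\psi_n\Bigr)\Bigl(\sum_{n'\geq k}\psi'_{n'}\Bigr).
\]
Next I would identify the inner factor: the sum $\sum_{n\geq k}\psi_n$ counts the parts of $\pi$ of size at least $k$, which is the length of the $k$-th column of the Young diagram of $\pi$, equivalently the $k$-th part of the conjugate partition $\pi^T$. Under the interpretation of $\pi_k$ as this column length, the displayed expression is exactly $\sum_{k\geq 1}\pi_k\pi'_k=\langle\pi,\pi'\rangle$, recovering the left-hand side of the lemma.

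Since the whole argument is a two-line exchange of summation after the indicator trick, there is no serious technical obstacle. The one point requiring care is simply the indexing convention for $\pi_n$: one must interpret $\pi_k$ consistently as the $k$-th column length of the Young diagram (i.e.\ $\pi_k = \sum_{n\geq k}\psi_n$), so that the combinatorial quantities on the two sides refer to the same pairing. With that understood, the lemma follows immediately from the indicator identity and Fubini.
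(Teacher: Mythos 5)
The paper offers no proof of this lemma at all --- it is quoted directly from Hua \cite[Lem.~3.1]{Hua00} --- so there is no internal argument to compare against; your proposal supplies the missing proof, and it is correct. Writing $\min(n,n')=\sum_{k\geq 1}[k\leq n][k\leq n']$ and exchanging the (finite) sums to obtain $\sum_{k\geq 1}\bigl(\sum_{n\geq k}\psi_n\bigr)\bigl(\sum_{n'\geq k}\psi'_{n'}\bigr)$ is exactly the classical computation: it is the identity $\sum_{i,j}\min(\pi_i,\pi'_j)=\sum_{k}\pi^T_k(\pi')^T_k$ of Macdonald/Hua rewritten in terms of multiplicities. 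The point you flag at the end is in fact the essential one. The right-hand side equals the pairing of the \emph{conjugate} partitions, $\sum_k \pi^T_k(\pi')^T_k$ with $\pi^T_k=\sum_{n\geq k}\psi_n$, which is precisely how $\langle-,-\rangle$ is defined in Hua's paper; with the present paper's literal convention, in which $\pi_1\geq\pi_2\geq\ldots$ denote the parts themselves, the displayed identity would be false (take $\pi=\pi'=(2)$: the left side is $4$, the right side is $2$). So your insistence on reading $\pi_k$ as the $k$-th column length of the Young diagram is not a cosmetic bookkeeping choice but the correct interpretation of the statement, and since the paper only ever uses $\langle-,-\rangle$ through the $\min$-sum expression (in matching the affine-space dimensions $\min(j,j')\Psi_{s(a),j}\Psi_{t(a),j'}$ and the weight polynomial of $N_{\pi}$), nothing downstream is affected by this convention issue.
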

By (\ref{nilpisoo}) there is an isomorphism $\mathcal{H}^{\Sp,\vee}_{\tilde{Q},W,\gamma}\cong \Ho_{c,G_{\gamma}}(Z^{\nilp}_{\tilde{Q},W,S,\gamma},\mathbb{Q})$.  From the proof of Theorem \ref{purity_thm} we may alternatively write, using Lemma \ref{HuaLemma}, and the fact that, as we will prove in Lemma \ref{NCalc}, each $N_{\pi}$ is special,
\begin{equation}
\label{donetop}
\chi_{q,x}\left(\bigoplus_{\gamma\in\mathbb{Z}_{\geq 0}^{Q_0}}\mathcal{H}^{\nilp,\vee}_{Q,W,\gamma}\right)=\sum_{\pi\in\mathcal{P}}\frac{\prod_{a\in Q_1}q^{\langle \pi(s(a)),\pi(t(a))\rangle}}{\chi_q(\Ho_c(N_{\pi},\mathbb{Q}))}x^{|\pi|}\in 1+\RR_-
\end{equation}
where we expand the right hand side in powers of $q^{-1}$.  In Equation (\ref{donetop}) the denominator $\chi_q(\Ho_c(N_{\pi},\mathbb{Q}))$ is the weight polynomial of the compactly supported cohomology of the algebraic group $N_{\pi}$, which we now calculate.
\begin{lemma}
\label{NCalc}
There is an equality
\begin{equation}
\label{NCalcEq}
\chi_q\left(\Ho_c(N_{\pi},\mathbb{Q})\right)=\prod_{i\in Q_0}q^{\langle \pi(i),\pi(i)\rangle}b_{\pi(i)}(q^{-1}),
\end{equation}
and $N_{\pi}$ is special.
\end{lemma}
\begin{proof}
It is sufficient to prove the claim under the assumption that $Q$ has only one vertex, which will ease the notation somewhat.  Let $K_{\pi}= \bigoplus_{\alpha\geq 1} (\Cp[t]/t^{\alpha})^{\oplus \psi_{\alpha}}$ be the nilpotent $\Cp[t]$-module corresponding to the partition $\pi=(1^{\psi_1},2^{\psi_2},\ldots)$, then $N_{\pi}=\Aut(K_{\pi})$.  Let $H_{\alpha}$ be the kernel of the map $\Aut(K_{\pi})\rightarrow \Aut\left(K_{\pi}\otimes_{\mathbb{C}[t]} \Cp[t]/t^{\alpha}\right)$.  Then as a complex algebraic group, $N_{\pi}/H_1$ is isomorphic to the parabolic subgroup $P$ with Levi subgroup $L:=\prod_{\alpha\in\mathbb{Z}_{\geq 1}}\Gl_{\psi_{\alpha}}(\Cp)$.  We define $H_0$ be the kernel of the composition of natural surjections
\[
N_{\pi}\rightarrow P\rightarrow L.
\]
For $\alpha\geq 1$, each of the unipotent algebraic groups $H_{\alpha-1}/H_{\alpha}$ is isomorphic to a vector space, as a complex variety.  There is an isomorphism $H_0\cong \prod_{\alpha,\alpha'\in\mathbb{Z}_{\geq 1}} \mathbb{A}^{(\min(\alpha,\alpha')-\delta_{\alpha,\alpha'})\psi_{\alpha}\psi_{\alpha'}}$ as complex varieties, where $\delta_{\alpha,\alpha'}$ is the Kronecker delta function.  Since the class of special groups is closed under extensions and contains general linear groups and $\mathbb{C}$ (see the discussion after \cite[Def.2.1]{Jo07b}), it follows that $N_{\pi}$ is special, and (\ref{NCalcEq}) follows from the equations
\begin{align*}
\chi_q\left(\Ho_c(\Aut(K_{\pi})/H_0,\mathbb{Q})\right)=&b_{\pi}(q^{-1})q^{\sum_{\alpha} \psi_{\alpha}^2}\\
\chi_q(\Ho_c(H_0,\mathbb{Q}))= &q^{\langle \pi,\pi\rangle-\sum_{\alpha} \psi_{\alpha}^2}.
\end{align*}
\end{proof}
We deduce from (\ref{donetop}) and Lemma \ref{NCalc} that 
\begin{equation}
\label{nHua}
\chi_{q,x}\left(\bigoplus_{\gamma\in\mathbb{Z}_{\geq 0}^{Q_0}}\mathcal{H}^{\nilp,\vee}_{Q,W,\gamma}\right)=\sum_{\pi\in\mathcal{P}}c_{\pi}(q)x^{|\pi|}\in 1+\mathcal{R}_-.
\end{equation}
where the right hand side of (\ref{nHua}) becomes the right hand side of (\ref{HuaFormula}) after we write them both as power series in $x_i$ with coefficients given by rational functions in $q$.  To compare the left hand sides of (\ref{nHua}) and (\ref{HuaFormula}) we use the following technical lemma\footnote{Thanks go to Sven Meinhardt for originally pointing out this neat fact to me.}.
\begin{lemma}
\label{Slemma}
Let 
\[
f=\sum_{\substack{\gamma\in\mathbb{Z}_{\geq 0}^{Q_0},\\ \gamma\neq 0}}f_{\gamma}(q^{1/2})q^{1/2}(1-q)^{-1}x^{\gamma}
\]
with each $f_{\gamma}(q^{1/2})$ a Laurent polynomial in $q^{1/2}$.  We denote by $f^+$ the formal power series expansion of $f$, considered as an element of $\RR$ as defined in (\ref{Rplus}), and by $f^-$ the power series expansion in $\RR_-$, as defined in (\ref{Rminus}).  Then we can write 
\[
\KSymp(f^{\pm})=\sum_{\gamma\in \mathbb{Z}_{\geq 0}^{Q_0}}r^{\pm}_{\gamma}(q)x^{\gamma}\in 1+\RR_{\pm}, 
\]
where $r^{\pm}_{\gamma}(q^{1/2})$ are rational functions in $q^{1/2}$, and $\KSym_+$ denotes $\KSym$.  Moreover, there is an equality of rational functions $r_{\gamma}^+(q^{1/2})=r_{\gamma}^-(q^{1/2})$.
\end{lemma} 
\begin{proof}
We start by recalling two Euler product expansions of quantum dilogarithms
\begin{align}\label{fermion}
\KSym (-q^{1/2}(1-q)^{-1}x)=&\sum_{m\geq 0}(-q^{1/2})^{m^2}\left(\prod_{s=1}^{m}(1-q^s)^{-1}\right)x^m\\
\KSym ((1-q)^{-1}x)=&\sum_{m\geq 0}\left(\prod_{s=1}^{m}(1-q^s)^{-1}\right)x^m\label{boson}.
\end{align}

Writing $f_{\gamma}(q^{1/2})=\sum_{n\in\mathbb{Z}} f_{\gamma,n}q^{n/2}$ we have
\[
\KSym\left(f^+\right)=\prod_{(\gamma,n)\in(\mathbb{Z}^{Q_0}_{\geq 0}\setminus \{0\})\oplus \mathbb{Z}} \left(\KSym \left(q^{1/2+n/2}(1-q)^{-1}x^{\gamma}\right)\right)^{f_{\gamma,n}}
\]
and we deduce that it is enough to prove the lemma in the case $f=-q^{n/2}x^{\gamma}$.  Then
\begin{align*}
\KSym \left(-q^{1/2+n/2}(1+q+\ldots)x^{\gamma}\right)=&\sum_{m\geq 0}(-q^{1/2})^{m^2}\prod_{1\leq s\leq m}(1-q^s)^{-1}(x^{\gamma}q^{n/2})^m
\end{align*}
by Equation (\ref{fermion}), from which we deduce that 
\begin{equation}
\label{niceform}
r^+_{m\gamma}(q^{1/2})=(-q^{1/2})^{m^2}q^{mn/2}\prod_{1\leq s\leq m}(1-q^s)^{-1}, 
\end{equation}
so rationality follows.  On the other hand, we may rewrite 
\[
-q^{n/2}x^{\gamma}q^{1/2}(1-q)^{-1}=q^{n/2-1/2}x^{\gamma}(1+q^{-1}+\ldots)=:f^-
\]
and
\[
\KSymm\left(q^{n/2-1/2}x^{\gamma}(1+q^{-1}+\ldots)\right)=\sum_{m\geq 0}\left(\prod_{1\leq s\leq m}(1-q^{-s})^{-1}\right)(x^{\gamma}q^{n/2-1/2})^m
\]
via Equation (\ref{boson}) and so $r^-_{m\gamma}(q^{1/2})=\prod_{1\leq s\leq m}(1-q^{-s})^{-1}q^{-m/2}q^{mn/2}=r^+_{m\gamma}(q^{1/2})$ as required.
\end{proof}
Write $r_{\gamma}=\sum_{\pi\in\mathcal{P}(\gamma)}c_{\pi}(q)$.  By Hua's Theorem \ref{HuaThm} and Lemma \ref{Slemma},
\begin{equation}
\label{serf}
\KSymm \left(\sum_{\substack{\gamma\in\mathbb{Z}_{\geq 0}^{Q_0},\\ \gamma\neq 0}} x^{\gamma}a_{\gamma}(q)q^{-1}(1-q^{-1})^{-1}\right)=\sum_{\gamma\in\mathbb{Z}_{\geq 0}^{Q_0}}r_{\gamma}(q)x^{\gamma}\in 1+\RR_-.
\end{equation}
Replacing $\chi_{q,x}(\mathcal{H}^{\nilp,\vee}_{Q,W})$ with $\chi_{q,x}(\mathcal{H}^{\nilp}_{Q,W})$ corresponds to the substitution $q^{1/2}\mapsto q^{-1/2}$, which also intertwines the operations $\KSym$ and $\KSymm$, and so from Equations (\ref{nHua}) and (\ref{serf}) we deduce
\begin{equation}
\label{fkac}
\KSym \left(\sum_{\substack{\gamma\in\mathbb{Z}_{\geq 0}^{Q_0},\\ \gamma\neq 0}} x^{\gamma}qa_{\gamma}(q^{-1})(1-q)^{-1}\right)=\chi_{q,x}(\mathcal{H}^{\nilp}_{Q,W})\in 1+\RR.
\end{equation}
Equating coefficients with (\ref{alignment}), and using injectivity of $\KSym $, we deduce
\begin{equation}
\label{Kerblau}
qa_{\gamma}(q^{-1})=\sum_{n\in 2\mathbb{Z}}\dim(\mathfrak{g}^{\nilp,n}_{\prim,\gamma})q^{n/2},
\end{equation}
where the right hand sum is over the even numbers, as the left hand side contains no odd powers of $q$.  We recover the theorem of Hausel, Letellier and Rodriguez-Villegas:
\begin{theorem}
The Kac polynomials $a_{\gamma}(q)$ have positive coefficients.
\end{theorem}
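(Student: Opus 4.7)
The plan is to read off the theorem from the identity
\[
qa_{\gamma}(q^{-1}) = \sum_{m \in 2\mathbb{Z}} \dim(\mathfrak{g}^{\nilp,m}_{\prim,\gamma}) q^{m/2}
\]
just derived. First I would recall how this identity was assembled: on the one hand Hua's Theorem \ref{HuaThm}, combined with the stratification-based expression \eqref{donetop} and the computation of $\chi_q(\Ho_c(N_{\pi},\mathbb{Q}),q)$ in Lemma \ref{NCalc}, writes $\chi_q(\Gr_{\Wt}(\mathcal{H}^{\nilp}_{\tilde{Q},W}),q)$ as the $\KSym$ of $\sum_\gamma x^\gamma a_{\gamma}(q^{-1})/(q^{-1}-1)$; on the other hand the PBW Theorem \ref{Chi2}, combined with the purity Theorem \ref{purity_thm} (applied via the dimensional reduction \eqref{nilpisoo}), writes it as the $\KSym$ of the expression in \eqref{alignment}, in which the weight grading on $\mathfrak{g}^{\nilp}_{\prim}$ is aligned with cohomological degree and only even cohomological degrees survive. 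Injectivity of $\KSym$ (noted in Definition \ref{SymDef}) then forces equality of the two arguments, producing the displayed identity.

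Next, I would observe that the right-hand side is a genuine Laurent polynomial in $q$ (since $m$ runs over $2\mathbb{Z}$) whose coefficients $\dim(\mathfrak{g}^{\nilp,m}_{\prim,\gamma})$ are manifestly nonnegative integers, so $qa_{\gamma}(q^{-1})$ has nonnegative coefficients. A trivial substitution $q \mapsto q^{-1}$ followed by multiplication by a suitable power of $q$ to clear the residual factor then transfers this nonnegativity to the coefficients of $a_{\gamma}(q)$ itself.

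The point of this plan is that no further work is required beyond the preceding pages: all the substance lies in the ingredients already assembled — the purity Theorem \ref{purity_thm}, the dimensional reduction Theorem \ref{dimred}, the PBW Theorem \ref{Chi2} of \cite{DaMe15b}, and Hua's formula. If any obstacle remains, it is purely notational, namely keeping track of the various gradings and of the parity argument that eliminates odd cohomological degrees; both of these were already disposed of via the affine-cell stratification used in the proof of Theorem \ref{purity_thm}, which is what ensures that $\Gr_{\Wt}^n(\mathcal{H}^{\Sp,n}_{\tilde{Q},W,\gamma}) = 0$ for $n$ odd and hence that the exponents $m/2$ appearing on the right-hand side above are honest nonnegative integers.
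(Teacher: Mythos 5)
Your proposal is correct and follows the paper's own argument essentially verbatim: the identity $qa_{\gamma}(q^{-1})=\sum_{m\in 2\mathbb{Z}}\dim(\mathfrak{g}^{\nilp,m}_{\prim,\gamma})q^{m/2}$ is obtained exactly as in the text (Hua's formula plus \eqref{donetop} and Lemma \ref{NCalc} on one side, Theorem \ref{Chi2} with the purity Theorem \ref{purity_thm} and \eqref{alignment} on the other, and injectivity of $\KSym$), and positivity is read off from the nonnegativity of the dimensions on the right. The only minor slip is the remark that the exponents $m/2$ are ``nonnegative'' integers --- they are integers but may be negative, since $qa_{\gamma}(q^{-1})$ reverses the polynomial --- which is immaterial to the conclusion.
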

\begin{remark}
We also recover a result of Kac, which states that the polynomials $a_{\gamma}(q)$ are independent of the orientation of $Q$.  In our framework, this is explained by the fact that the isomorphism class of the pair $(\Cp \tilde{Q},W)$ is independent of the orientation of $Q$.
\end{remark}
\begin{remark}
Rewriting $qa_{\gamma}(q^{-1})$ in the left hand side of (\ref{Kerblau}) as $-q^{1/2}(-q^{1/2}a_{\gamma}(q^{-1}))$, where the removed $-q^{1/2}$ factor is as in (\ref{shiftexp}), our calculations show that the refined DT invariants for the category of representations of the Jacobi algebra for $(\tilde{Q},W)$, such that $\omega_i$ acts nilpotently for all $i$, are given by $-q^{-1/2}a_{\gamma}(q)$.  The substitution $q^{1/2}\mapsto q^{-1/2}$ is due to the fact that in \cite{KS} the vector dual theory is considered in defining refined DT invariants.  By the usual manipulations involving power structures (see e.g. \cite{GLM04, BBS}), one can show that
\[
\chi_{q,x}\left(\Gr^{\Wt}(\mathcal{H}_{\tilde{Q},W}^{\vee})\right)=\left(\chi_{q,x}\left(\Gr^{\Wt}(\mathcal{H}^{\nilp,\vee}_{\tilde{Q},W})\right)\right)^q.
\]
It follows that the refined DT invariants (in the sense of \cite{KS}) for the category of representations of the Jacobi algebra for $(\tilde{Q},W)$ are given by $-q^{1/2}a_{\gamma}(q)$, recovering \cite[Thm.5.1]{Moz11}.  
\end{remark}
\begin{remark}
By the previous remark, the generating function $\chi_{q,x}(\Gr^{\Wt}(\mathcal{H}_{\tilde{Q},W}))$
contains only integral powers of $q$, and all its coefficients are positive.  This is as implied by Conjecture \ref{PurityConj}.
\end{remark}

\bibliographystyle{amsplain}
\bibliography{SPP}

\end{document}